\newcommand{\executeiffilenewer}[3]{%
\ifnum\pdfstrcmp{\pdffilemoddate{#1}}%
{\pdffilemoddate{#2}}>0%
{\immediate\write18{#3}}\fi%
}
\newcommand{%
\executeiffilenewer{.svg}{.pdf}%
{inkscape -z -D --file=.svg %
--export-pdf=.pdf --export-latex}%
\input{.pdf_tex}%
}[1]{%
\executeiffilenewer{#1.svg}{#1.pdf}%
{inkscape -z -D --file=#1.svg %
--export-pdf=#1.pdf --export-latex}%
\input{#1.pdf_tex}%
}
\newtheorem{theorem}{Theorem}[section]
\newtheorem{lemma}[theorem]{Lemma}
\newtheorem{proposition}[theorem]{Proposition}
\newtheorem{definition}[theorem]{Definition}
\def\Cay{\mbox{\rm Cay}}
\def\Coset{\mbox{\rm Coset}}
\def\Z{\ns Z}
\def\vec0{\mbox{\boldmath $0$}}
\def\G{\Gamma}
\def\Z{\ns{Z}}
\def\G{\Gamma}
\def\Z{\mathbb Z}
\begin{document}

\title{From expanded digraphs to lifts of\\
voltage digraphs and line digraphs}

\author{
C. Dalf\'o$^a$, M.A. Fiol$^b$, M. Miller$^c$, J. Ryan$^d$\\
$^{a,b}${\small Departament de Matem\`atiques} \\
{\small Universitat Polit\`ecnica de Catalunya,} \\
{\small Barcelona, Catalonia} \\
{\small {\tt{\{cristina.dalfo,miguel.angel.fiol\}@upc.edu}}} \\
$^{b}${\small Barcelona Graduate School of Mathematics,} \\
{\small  Barcelona, Catalonia} \\
$^c${\small School of Mathematical and Physical Science} \\
{\small The University of Newcastle,}\\
{\small Newcastle, Australia}\\
{\small{\tt {mirka.miller@newcastle.edu.au}}}\\
$^d${\small School of Electrical Engineering and Computer Science}\\
{\small The University of Newcastle,}\\
{\small Newcastle, Australia}\\
{\small{\tt {joe.ryan@newcastle.edu.au}}}
}
\date{}

\maketitle

\begin{abstract}
In this note we present a general approach to construct large digraphs from small ones. These are called expanded digraphs, and, as particular cases, we show the close relationship between lifted digraphs of voltage digraphs and line digraphs, which are two known ways to obtain dense digraphs.
In the same context, we show the equivalence between the vertex-splitting and partial line digraph techniques.
Then, we give a sufficient condition for a lifted digraph of a base line digraph to be again a line digraph.
Some of the results are illustrated with two well-known families of digraphs. Namely, the De Bruijn and Kautz digraphs, where it is shown that both families can be seen as lifts of smaller De Bruijn digraphs with appropriate voltage assignments.
\end{abstract}

\noindent{\em Keyword:}
Digraph, adjacency matrix, regular partition, quotient digraph, voltage digraphs, lifted digraph, partial line digraphs, vertex-split digraphs.

\noindent{\em Mathematics Subject Classifications:} 05C20, 05C50, 15A18. 

\section{Introduction}
In the study of interconnection and communication networks, the theory of digraphs plays a key role because, in many cases, the links between nodes are unidirectional.
In this theory, there are three concepts that have shown to be very fruitful to construct good and efficient networks. Namely, those of quotient digraphs, voltage digraphs and (partial) line digraphs. Roughly speaking, quotient digraphs allow us to obtain a simplified or `condensed' version of a bigger digraph, while the voltage and line digraph techniques do the converse by `expanding' a smaller digraph. From this point of view, it is natural that
the three techniques have close relationships. In this paper we explore some of such interrelations by introducing a general construction that we call expanded digraphs.
These digraphs are obtained from a base graph whose vertices become vertex sets in the new graph, and the adjacencies are defined from a set of mappings.
A special case is obtained when such mappings are defined within a group, so obtaining the lifted graphs of base graphs with assigned voltages (elements of the group) on its arcs. In this context, we show that De Bruijn and Kautz digraphs can be defined as lifted digraphs of smaller De Bruijn digraphs.
Moreover, it is proved that, under some sufficient conditions, the lift of a base graph that is a line digraph is again a line digraph.
In the more general case of nonrestricted maps, we consider the quotient digraphs, and the equivalent constructions of vertex-split digraphs and partial line digraphs.
Here, it turns out that the line digraph and quotient operations commute. Finally, it is proved that the techniques of vertex-splitting and partial line digraph are equivalent, and some consequences are derived.

%
%

\subsection{Background}
Let us first recall some basic terminology and notation concerning digraphs. For the concepts and/or results not presented here, we refer the reader to some of
the basic textbooks on the subject; see, for instance, Chartrand and
Lesniak~\cite{cl96} or Diestel~\cite{d10}.

Through this paper, $\Gamma=(V,E)$ denotes a digraph, with vertex set $V$ and arc set $E$. An arc from vertex $u$ to vertex $v$ is denoted by either $(u,v)$,
$uv$, or $u\rightarrow v$. We allow {\em loops} (that is, arcs from a vertex to itself), and {\em
multiple arcs}.
The set of vertices adjacent to and from $v\in V$ is denoted by $\G^{-}(v)$ and $\G^{+}(v)$,
respectively. Such vertices are referred to as {\em in-neighbors} and {\em out-neighbors} of $v$,
respectively. Moreover, $\delta^-(v)=|\G^{-}(v)|$ and $\delta^+(v)=|\G^{+}(v)|$ are
the \emph{in-degree} and \emph{out-degree} of vertex $v$, and $\G$ is {\em
$d$-regular} when $\delta^+(v)=\delta^-(v)=d$ for any $v\in V$.


\section{Expanded digraphs}
Expanded digraphs are, in fact, a type of compounding that consists of connecting
together several copies of a (di)graph by setting some (directed) edges between any two copies. Let $\Gamma=(V,E)$ be a (base) digraph on $n$ vertices. As said before, we allow loops and multiple arcs.
Assume that each vertex $v\in V$ has assigned a vertex set $U_v$,
and each arc $e=(u,v)\in E$  has assigned a mapping $\phi_{uv}: U_u\rightarrow  U_v$.

\begin{definition}
Let $\Phi=\{\phi_{uv}:(u,v)\in E\}$. The {\em expanded digraph} $\G^{\Phi}$ of $\G$ with respect to $\Phi$ has
vertex set $V(\G^{\Phi})=\{U_v : v\in V\}$, and there is an arc from $x\in U_u$ to $y\in U_v$ whenever $(u,v)\in E$ and $\phi_{uv}(x)=y$.
\end{definition}

Some important particular cases of this construction are obtained when the mappings in $\Phi$ are defined from a group:

\begin{itemize}
\item
{\bf Cayley digraphs}. Let $G$ be a group with generating set $\Delta$ having $\delta$ elements. If $\G$ is a singleton with assigned vertex set $G$,  $\delta$ loops, and each loop $e$ has the mapping  $\phi_e:h\rightarrow hg$, with $g\in \Delta$, then the expanded digraph $\G^{\Phi}$ is the {\em Cayley digraph} $\Cay (G,\Delta)$.
\item
{\bf Coset digraphs}. Let $G$ be a group with generating set $\Delta$ having $\delta$ elements and with subgroup $H$. If $\G$ is a singleton with assigned vertex set $\{Hh: h\in G\}$, $\delta$ loops,  and each loop $e$ has the mapping  $\phi_e:Hh\rightarrow Hhg$, with $g\in \Delta$, then the expanded digraph $\G^{\Phi}$ corresponds to the {\em coset digraph} $\Coset(G,H,\Delta)$.
\end{itemize}
Two natural generalizations of these concepts are the following (as far as we know, the second one is a new proposal):
\begin{itemize}
\item
{\bf Lifted (of voltage) digraphs} or {\bf expanded Cayley digraphs}. Let $G$ be a group with generating set $\Delta$ having $\delta$ elements. If each vertex of $\G$ is assigned to the vertex set $G$, and each arc $e$ has the mapping  $\alpha(e)=\phi_e:h\rightarrow hg$, with $g\in \Delta$, then the expanded digraph $\G^{\Phi}$ is
the so-called {\em lifted digraph} (or simply {\em lift}) $\G^{\alpha}$ (see Section \ref{sec:voltage}).
\item
{\bf Expanded coset digraphs}.  Let $G$ be a group with generating set $\Delta$ having $\delta$ elements and with subgroup $H$. If each vertex of $\G$ is assigned to the vertex set $\{Hh: h\in G\}$, and each arc $e$ has the mapping  $\phi_e:Hh\rightarrow Hhg$, with $g\in \Delta$, then we refer to the corresponding expanded digraph $\G^{\Phi}$ as the {\em expanded coset digraph} $\G^{\alpha}$.
\end{itemize}

\section{Voltage and lifted digraphs}
\label{sec:voltage}

When a group is involved in the setting of the mappings, the symmetry of the obtained constructions  yield digraphs
with large automorphism groups. To our knowledge, one of the first papers where
voltage (undirected) graphs were used for construction of dense graphs was that of Alegre, Fiol and Yebra~\cite{afy86}, but without using the name of `voltage graphs'. This name was coined previously by Gross~\cite{g74}. For more information, see Gross and
Tucker~\cite{gt87}, Baskoro, Brankovi\'{c}, Miller, Plesn\'{\i}k, Ryan and
\v{S}ir\'a\v{n}~\cite{bbmrs97}, and Miller and \v{S}ir\'a\v{n}~\cite{ms}.

Let $\G$ be a digraph with vertex set $V=V(\G)$ and arc set $E=E(\G)$.
Then, given a group $G$ with generating set $\Delta$, a
voltage assignment of $\G$ is a mapping $\alpha:E\rightarrow \Delta$. The lift
$\Gamma^\alpha$ is the digraph with vertex set  $V(\Gamma^\alpha)=V\times G$ and
arc set $E(\Gamma^\alpha)=E\times G$, where there is an arc from vertex $(u,g)$ to
vertex $(v,h)$ if and only if $uv\in E$ and $h=g\alpha(uv)$. Such an arc is denoted by $(uv,g)$.

\subsection{De Bruijn and Kautz digraphs}

Recall that
the {\em De Bruijn digraph}  $B(d,k)$ has vertices $x_1x_2\ldots x_{k}$, where $x_i\in \mathbb{Z}_{d}$ for $i=1,\ldots,k$, and adjacencies
$$
x_1x_2\ldots x_{k}\ \rightarrow\  x_2x_3\ldots x_{k} y,\qquad y\in \mathbb{Z}_{d}.
$$
In contrast with that, the {\em Kautz digraph} $K(d,k)$ has vertices $x_1x_2\ldots x_{k}$, where $x_i\in \mathbb{Z}_{d+1}$, $x_{i}\neq x_{i+1}$ for $i=1,\ldots,k-1$, and adjacencies:
$$
x_1x_2\ldots x_{k}\ \rightarrow\  x_2x_3\ldots x_{k} y,\qquad y\neq x_{k}.
$$
Now, if we consider the mapping
$$
\Phi : x_1x_2\ldots x_{k}\ \mapsto\  x_1;(x_2-x_1)\ldots (x_{k}-x_{k-1}),
$$
two alternative definitions are the following:
The De Bruijn digraph $B(d,k)$ has vertices $\beta_1;\beta_2\beta_3\ldots \beta_{k}$, where $\beta_i\in \mathbb{Z}_{d}$, and adjacencies:
\begin{equation}
\label{alter-DB}
\beta_1;\beta_2\beta_3,\ldots \beta_{k}\ \rightarrow\ \beta_1+\beta_2;\beta_3\ldots \beta_{k}\gamma,\qquad \gamma\in \Z_d.
\end{equation}
Similarly, the Kautz digraph $K(d,k)$ has vertices $\alpha;\beta_2\beta_3\ldots \beta_{k}$, where $\alpha\in \mathbb{Z}_{d+1}$, $\beta_i\in \mathbb{Z}_{d+1}\setminus \{0\}$, and adjacencies:
\begin{equation}
\label{alter-K}
\alpha;\beta_2\beta_3\ldots \beta_{k}\ \rightarrow\ \alpha+\beta_2;\beta_3\ldots \beta_{k}\gamma,\qquad \gamma\neq 0.
\end{equation}
Notice that the mapping
$$
\Phi^*: x_1x_2\ldots x_d\ \mapsto\  \beta_1\beta_2\ldots \beta_{k-1},
 $$
where $\beta_i=x_{i+1}-x_i$, is a homomorphism from the Kautz digraph $K(d,k)$ to the De Bruijn digraph $B(d,k-1)$. Also, note the inverse mapping
\begin{equation}
\label{inverse}
\Phi^{-1}: \alpha;\beta_1\beta_2\ldots \beta_{k-1}\ \mapsto \ \alpha(\alpha+\beta_1)(\alpha+\beta_1+\beta_2)\ldots (\alpha+\beta_1+\cdots+\beta_{k-1}).
\end{equation}
Now we show that both De Bruijn and Kautz digraphs can be seen as lifts of smaller De Bruijn digraphs with appropriate voltage assignments.

\begin{proposition}
\label{lemmaDB}
The equality  $B(d,k+1)=B(d,k)^\alpha$ holds with
\begin{eqnarray*}
\alpha  :    E(B(d,k)) & \rightarrow & \Z_d \\
       x_1\ldots x_k x_{k+1} & \mapsto & x_{1}.
\end{eqnarray*}
\end{proposition}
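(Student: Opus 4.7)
The plan is to exhibit an explicit digraph isomorphism $\Psi\colon B(d,k)^\alpha \to B(d,k+1)$. First, I would unfold the definitions. By the construction of the lift, a vertex of $B(d,k)^\alpha$ is a pair $(x_1 x_2 \ldots x_k, g)$ with $x_i, g \in \Z_d$, and there is an arc $(x_1 x_2 \ldots x_k, g) \to (x_2 x_3 \ldots x_{k+1}, h)$ exactly when $x_1 x_2 \ldots x_k \to x_2 x_3 \ldots x_{k+1}$ is an arc of $B(d,k)$ and $h = g + \alpha(x_1 x_2 \ldots x_{k+1}) = g + x_1$. Counting vertices gives $|V(B(d,k)^\alpha)| = d \cdot d^k = d^{k+1} = |V(B(d,k+1))|$, and both digraphs are $d$-regular.

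The crucial observation is that the alternative representation~(\ref{alter-DB}) is tailor-made for the desired isomorphism: the extra voltage coordinate $g$ plays precisely the role of the leading letter $\beta_1$. Accordingly, I would define
\begin{equation*}
\Psi\colon (x_1 x_2 \ldots x_k, g) \longmapsto g; x_1 x_2 \ldots x_k,
\end{equation*}
reading the right-hand side as a vertex of $B(d,k+1)$ in the alternative form with $\beta_1 = g$ and $\beta_{i+1} = x_i$ for $i=1,\ldots,k$. Passing back to the original presentation via the inverse of $\Phi$ (as in~(\ref{inverse})), this amounts to sending $(x_1 x_2 \ldots x_k, g)$ to the string $g\,(g+x_1)\,(g+x_1+x_2)\cdots(g+x_1+\cdots+x_k)$.

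To finish, I would check that $\Psi$ is a bijection and arc-preserving. Bijectivity is immediate from the explicit inverse $\beta_1; \beta_2 \ldots \beta_{k+1} \mapsto (\beta_2 \beta_3 \ldots \beta_{k+1}, \beta_1)$. For arcs, the lift adjacency $(x_1 \ldots x_k, g) \to (x_2 \ldots x_{k+1}, g+x_1)$ transports under $\Psi$ to
\begin{equation*}
g; x_1 x_2 \ldots x_k \longrightarrow (g+x_1); x_2 x_3 \ldots x_{k+1},
\end{equation*}
which is precisely an instance of~(\ref{alter-DB}) with the free letter $\gamma$ chosen as $x_{k+1}$. Since $\Psi$ is a vertex bijection that sends out-arcs to out-arcs and both digraphs are $d$-regular, $\Psi$ must be bijective on arcs as well, giving the claimed isomorphism.

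The main obstacle is really to spot the right bijection; once one works in the alternative form~(\ref{alter-DB}), the adjacency check collapses into a one-line reindexing. A direct verification in the original presentation would force one to track the telescoping sums $g, g+x_1, g+x_1+x_2, \ldots$ through each step, and the essential insight is that the voltage update $g \mapsto g+x_1$ is exactly the leading-coordinate update $\beta_1 \mapsto \beta_1 + \beta_2$ of~(\ref{alter-DB}).
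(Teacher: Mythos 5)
Your proof is correct and follows essentially the same route as the paper: both identify the lift vertex $(x_1\ldots x_k,g)$ with the string $g\,(g+x_1)\cdots(g+x_1+\cdots+x_k)$ via the alternative representation~(\ref{alter-DB}), and observe that the voltage update $g\mapsto g+x_1$ realizes the shift adjacency of $B(d,k+1)$. Your additional counting/regularity argument for surjectivity on arcs is a small tidiness bonus over the paper's version, but the underlying idea is identical.
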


\begin{proof}
The vertices of the arc $x_1\ldots x_k\ \rightarrow\ x_2\ldots x_{k+1}$ in  $B(d,k)$, with voltage $x_{k+1}$, give rise to the vertex subsets $X=\{x;x_1\ldots x_k:x\in \Z_d\}$ and
$Y=\{y;x_2\ldots x_{k+1}:y\in \Z_d\}$ in $B(d,k)^{\alpha}$. Moreover, the elements of $X,Y\cong \Z_d$ can be written as
$$
x(x+x_1)(x+x_1+x_2)\ldots(x+x_1+\cdots+x_{k}),\qquad x\in \Z_d,
$$
and
$$
y(y+x_2)(y+x_2+x_3)\ldots(y+x_2+\cdots+x_k+x_{k+1}),\qquad y\in \Z_d.
$$
Thus, if we define the mapping $\alpha: X\mapsto Y$ in such a way that $y=x+x_1$, we get the adjacencies
$$
x(x+x_1)(x+x_1+x_2)\ldots(x+x_1+\cdots+x_{k})\ \rightarrow\  (x+x_1)(x+x_1+x_2)\ldots(x+x_1+\cdots+x_{k}+x_{k+1}),
$$
with $x_{k+1}\in \Z_d$, which correspond to those of the De Bruijn digraph $B(d,k+1)$, as claimed.
\end{proof}

As an example, Figure~\ref{voltage-deBruijn-dibuix} shows how to obtain $B(2,3)$ as a lifted digraph of $B(2,2)$.

\begin{figure}[t]
    \begin{center}
  \includegraphics[width=14cm]{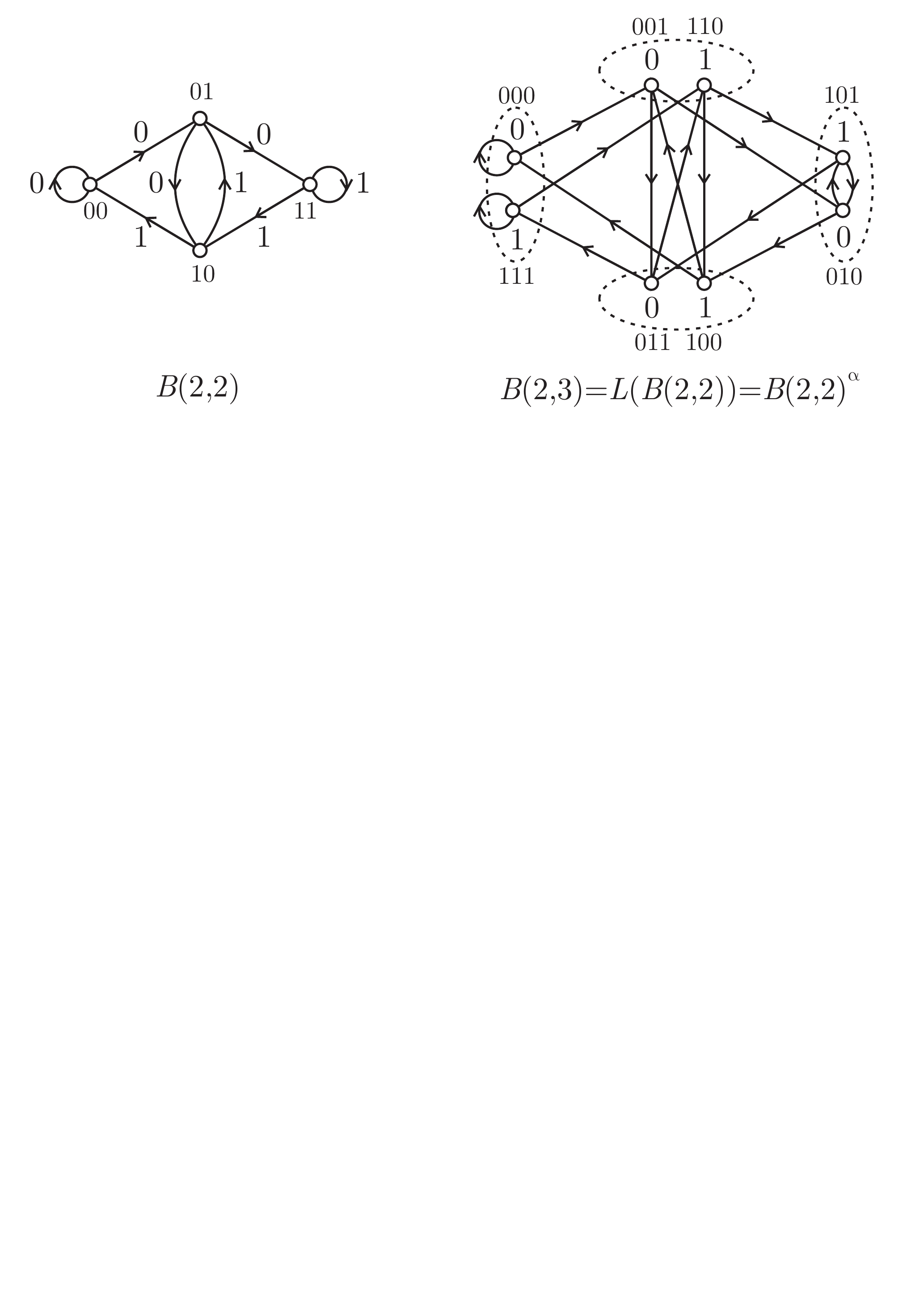}
  \end{center}
  \vskip-14cm
  \caption{$B(2,3)$ as a lifted digraph of the base digraph $B(2,2)$.}
  \label{voltage-deBruijn-dibuix}
\end{figure}

Similarly, the following result shows that Kautz digraphs can be seen as lifted digraphs of De Bruijn digraphs.

\begin{proposition}
The equality $K(d,k+1)=B(d,k)^\beta$ holds with
\begin{eqnarray*}
\beta   :  E(B(d,k)) & \rightarrow & \Z_{d+1} \\
       x_1\ldots x_k x_{k+1} & \mapsto & x_{1},
\end{eqnarray*}
where $x_i\in \Z_{d+1}\setminus \{0\}$.
\end{proposition}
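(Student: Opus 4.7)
The plan is to mimic the proof of Proposition~\ref{lemmaDB} almost verbatim, but now working with the group $\Z_{d+1}$ as the voltage group and interpreting the digits of $B(d,k)$ as elements of $\Z_{d+1}\setminus\{0\}$ (a relabeling of the $d$-element alphabet, so the underlying digraph structure is unchanged). The key tool will again be the inverse mapping $\Phi^{-1}$ from~(\ref{inverse}), which converts a vertex written in the ``$\alpha;\beta_2\ldots\beta_k$'' form into a standard Kautz-style string via partial sums.

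First, I would take an arbitrary arc $x_1\ldots x_k\to x_2\ldots x_{k+1}$ of $B(d,k)$ (with $x_i\in\Z_{d+1}\setminus\{0\}$) and, using the voltage $\beta(x_1\ldots x_k x_{k+1})=x_1\in\Z_{d+1}$, write out the two ``fibers''
$$
X=\{(x;x_1\ldots x_k):x\in\Z_{d+1}\},\qquad Y=\{(y;x_2\ldots x_{k+1}):y\in\Z_{d+1}\}
$$
in $B(d,k)^\beta$. Next, applying $\Phi^{-1}$ I would rewrite the elements of $X$ as strings
$$
x(x+x_1)(x+x_1+x_2)\cdots(x+x_1+\cdots+x_k),\qquad x\in\Z_{d+1},
$$
and similarly for $Y$. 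The constraint $x_i\neq 0$ in $\Z_{d+1}$ guarantees that any two consecutive entries of such a string differ, so every element of $X$ (and of $Y$) is a legitimate vertex of $K(d,k+1)$. A quick counting check $(d+1)d^k=|V(K(d,k+1))|$ plus the bijectivity of $\Phi^{-1}$ will confirm that the map from $V(B(d,k)^\beta)$ to $V(K(d,k+1))$ is a bijection.

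The adjacency check then proceeds exactly as in Proposition~\ref{lemmaDB}: the rule $y=x+\beta(x_1\ldots x_{k+1})=x+x_1$ produces the arc
$$
x(x+x_1)\cdots(x+x_1+\cdots+x_k)\ \rightarrow\ (x+x_1)(x+x_1+x_2)\cdots(x+x_1+\cdots+x_{k+1}),
$$
which is a left-shift with a new last digit differing from the previous one by $x_{k+1}\neq 0$, precisely matching the Kautz adjacency rule~(\ref{alter-K}). As $x_{k+1}$ ranges over $\Z_{d+1}\setminus\{0\}$ we obtain all $d$ out-neighbors, so the arc set of $B(d,k)^\beta$ coincides with that of $K(d,k+1)$.

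The only subtle step, and the place where I would need to be careful, is the bookkeeping around the ``forbidden digit'' condition: we must argue that $x_i\neq 0$ in the base-graph labeling is exactly what makes all the partial-sum strings Kautz-admissible, and conversely that every Kautz vertex arises in this way. This is really just the observation that passing to consecutive differences is a bijection between strings with distinct consecutive entries and strings of nonzero differences, but it is the one ingredient that distinguishes this proposition from the De Bruijn case and should be stated explicitly.
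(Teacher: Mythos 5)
Your proposal follows essentially the same route as the paper's own proof: take an arc of $B(d,k)$ with voltage $x_1$, rewrite the two fibers via the partial-sum map $\Phi^{-1}$, and check that the rule $y=x+x_1$ reproduces the Kautz adjacencies \eqref{alter-K}. Your explicit remark that $x_i\neq 0$ is what makes the partial-sum strings Kautz-admissible (and that $x_{k+1}$ ranges over $\Z_{d+1}\setminus\{0\}$, giving exactly $d$ out-neighbors) is a welcome clarification of a point the paper's proof leaves implicit.
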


\begin{proof}
The proof is similar to that of Proposition \ref{lemmaDB}. Indeed,
the vertices of the arc $x_1\ldots x_k\ \rightarrow\ x_2\ldots x_{k+1}$ in  $B(d,k)$, with $x_i\in \Z_{d+1}\setminus \{0\}$ and voltage $x_{k+1}$, give rise to the vertex subsets $X=\{x;x_1\ldots x_k:x\in \Z_{d+1}\}$ and
$Y=\{y;x_2\ldots x_{k+1}:y\in \Z_{d+1}\}$ in $B(d,k)^{\beta}$. Moreover, the elements of $X,Y\cong \Z_{d+1}$ can be written as
$$
x(x+x_1)(x+x_1+x_2)\ldots(x+x_1+\cdots+x_{k}),\qquad x\in \Z_{d+1},
$$
and
$$
y(y+x_2)(y+x_2+x_3)\ldots(y+x_2+\cdots+x_k+x_{k+1}),\qquad y\in \Z_{d+1}.
$$
Thus, if we define the mapping $\beta: X\mapsto Y$ in such a way that $y=x+x_1$, we get the adjacencies
$$
x(x+x_1)(x+x_1+x_2)\ldots(x+x_1+\cdots+x_{k})\ \rightarrow\  (x+x_1)(x+x_1+x_2)\ldots(x+x_1+\cdots+x_{k}+x_{k+1}),
$$
with $x_{k+1}\in \Z_{d+1}$, which correspond to those of the Kautz digraph $K(d,k+1)$.
This completes the proof.
\end{proof}

By way of example, in Figure~\ref{K(2,3)voltage-dibuix} we show how $K(2,3)$ can be seen as a lifted digraph of $B(2,2)$.

\begin{figure}[t]
    \begin{center}
  \includegraphics[width=14cm]{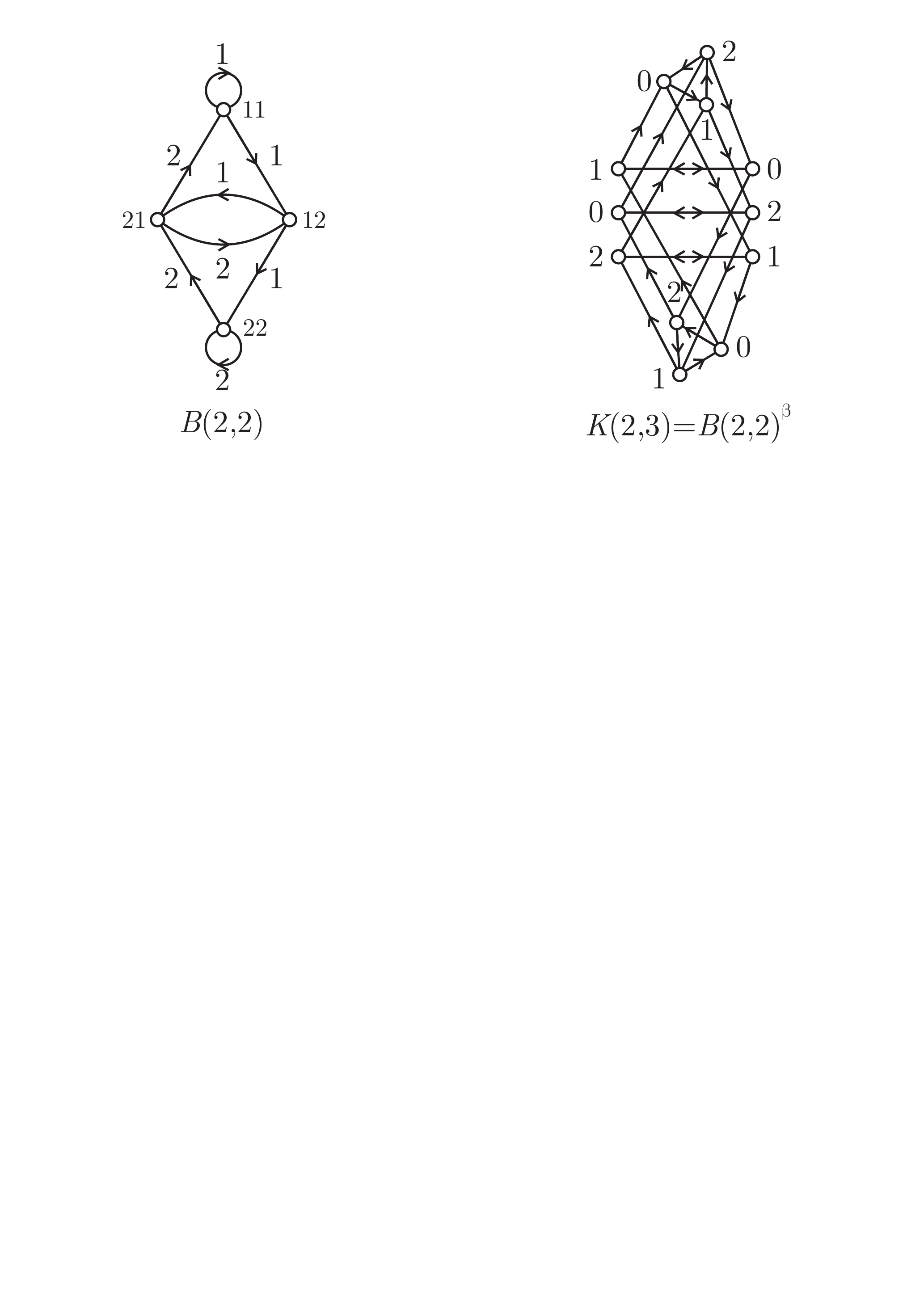}
  \end{center}
  \vskip-13.5cm
  \caption{$K(2,3)$ as a lifted digraph of the base digraph $B(2,2)$.}
  \label{K(2,3)voltage-dibuix}
\end{figure}

\section{Quotient digraphs and line digraphs}
\label{sec:reg-part}

In the more general context of nonrestricted maps, we now consider the quotient digraphs, and the equivalent constructions of vertex-split digraphs and partial line digraphs.

\subsection{Regular partitions and quotient digraphs}
Let $\Gamma=(V,E)$ be a digraph with $n$ vertices. A partition $\pi$ of its
vertex set $V=U_1\cup U_2 \cup\cdots\cup U_m$, for $m\le n$, is called {\em regular} if the number $c_{ij}$ of
arcs from a vertex $u\in U_i$ to vertices in $U_j$ only depends on $i$ and $j$. The numbers $c_{ij}$ are usually called {\em intersection parameters} of the partition.
The {\em quotient digraph} of $\G$ with respect to $\pi$, denoted by $\pi(\G)$, has vertices the subsets $U_i$, for $i=1,\ldots, m$, and $c_{ij}$ parallel arcs from vertex $U_i$ to vertex $U_j$.

\subsection{Line digraphs}
In the {\em line digraph} $L(\G)$ of a digraph $\G$, each vertex
represents an arc of $\G$, that is,
$V(L(\G))=\{uv: (u,v)\in E(G)\}$, and a vertex $uv$ is adjacent to a vertex $wz$ when
the arc $(u,v)$ is adjacent to the arc $(w,z)$: $u\rightarrow v(=w)\rightarrow z$.
Line digraphs have shown to be very interesting structures in the study of dense digraphs (that is, digraphs with a large number of vertices for given degree and diameter). Moreover, it is know that the iteration of the line digraph technique yields digraphs with maximum connectivity. For more details, see, for instance the papers by Fiol, Yebra, and Alegre~\cite{FiYeAl83,FiYeAl84}, and F\`abrega and Fiol~\cite{ff89}. Furthermore, by the Heuchenne's condition~\cite{He64}, a digraph $\G$ is a line digraph if and only if, for every pair of vertices $u$ and $v$, either $\G^+(u)=\G^+(v)$ or $\G^+(u)\cap\G^+(v)=\emptyset$.

\subsection{Regular partitions versus line digraphs}

The following result shows that the quotient and line digraph operations commute.

\begin{proposition}
Every regular partition $\pi$ of a digraph $\G$ induces a regular partition $\pi'$ in its line digraph $L(\G)$ and
\begin{equation}
\label{Lpi=piL}
L(\pi(\G))\cong\pi'(L(G)).
\end{equation}
\end{proposition}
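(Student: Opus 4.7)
The plan is to construct $\pi'$ by refining the natural partition of $E(\G)=V(L(\G))$ into the sets $W_{ij}=\{uv\in E(\G): u\in U_i,\ v\in U_j\}$. The coarse partition $\{W_{ij}\}$ is already regular, but its quotient has only one vertex per nonempty pair $(i,j)$, whereas $L(\pi(\G))$ has $c_{ij}$ vertices of type $(i,j)$, one for each of the parallel arcs $U_i\to U_j$ of $\pi(\G)$; so each $W_{ij}$ must be split into $c_{ij}$ sub-blocks. To this end, for every $u\in U_i$ I would fix an arbitrary bijection between the $c_{ij}$ out-arcs of $u$ landing in $U_j$ and the labels $\{1,\ldots,c_{ij}\}$, writing $e_{u,j}^{s}$ for the arc labelled $s$; in parallel I would label the $c_{ij}$ arcs of $\pi(\G)$ from $U_i$ to $U_j$ as $a_{ij}^{1},\ldots,a_{ij}^{c_{ij}}$. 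The blocks of $\pi'$ would then be the sets $B_{ij}^{s}=\{e_{u,j}^{s}:u\in U_i\}$, one for each triple $(i,j,s)$ with $c_{ij}>0$ and $1\le s\le c_{ij}$.

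Next I would verify regularity of $\pi'$ and simultaneously read off the intersection numbers. Given $e=uv\in B_{ij}^{s}$, the out-neighbours of $e$ in $L(\G)$ are the arcs $vw$ with $w\in\G^{+}(v)$; since $v\in U_j$ and $\pi$ is regular, $v$ has exactly $c_{jk}$ out-arcs into each $U_k$, carrying distinct labels $1,\ldots,c_{jk}$. Hence, for any potential target block $B_{j'k}^{t}$, the number of out-neighbours of $e$ lying in it is $1$ when $j'=j$ (the unique $vw$ labelled $t$) and $0$ otherwise. This count is independent of the chosen representative $e$, so $\pi'$ is a regular partition with intersection parameters $c'_{(i,j,s),(j,k,t)}=1$ and $c'_{(i,j,s),(j',k,t)}=0$ whenever $j'\ne j$.

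Finally, to establish \eqref{Lpi=piL} I would define $\phi\colon V(L(\pi(\G)))\to V(\pi'(L(\G)))$ by $\phi(a_{ij}^{s})=B_{ij}^{s}$; both vertex sets are in bijection with the same collection of triples $(i,j,s)$, so $\phi$ is a vertex bijection. In $L(\pi(\G))$ there is exactly one arc from $a_{ij}^{s}$ to $a_{jk}^{t}$ because the head of $a_{ij}^{s}$ coincides with the tail of $a_{jk}^{t}$ (both equal $U_j$), and no arc between vertices of other types. The intersection computation of the previous paragraph shows that $\pi'(L(\G))$ carries the same adjacencies with the same multiplicities, so $\phi$ is the desired isomorphism. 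The principal conceptual obstacle is that the induced partition $\pi'$ is not canonical: it depends on the two labellings chosen. The write-up must therefore stress that any two such choices yield isomorphic quotients, so that the phrase ``$\pi$ induces $\pi'$'' refers to a partition well-defined up to this choice, and that \eqref{Lpi=piL} is independent of the labellings.
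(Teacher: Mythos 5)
Your proof is correct, but it takes a genuinely different---and in fact more careful---route than the paper. The paper's proof takes $\pi'$ to be the coarse partition into the sets $U_{ij}=\{u_iu_j:u_i\in U_i,\ u_j\in U_j\}$ (your $W_{ij}$), verifies its regularity, and then only checks that an arc $U_iU_j\rightarrow U_jU_k$ of $L(\pi(\G))$ corresponds to an arc $U_{ij}\rightarrow U_{jk}$ of $\pi'(L(\G))$; it never compares vertex counts or arc multiplicities. As you observe, that coarse choice produces a quotient with a single vertex for each nonempty pair $(i,j)$, whereas $L(\pi(\G))$ has $c_{ij}$ vertices of that type, so the two digraphs can only be isomorphic under the coarse partition when every $c_{ij}\le 1$ (for a single vertex with two loops and the trivial partition, $L(\pi(\G))$ has two vertices while the coarse quotient has one). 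Your refinement of each $W_{ij}$ into $c_{ij}$ labelled sub-blocks $B_{ij}^{s}$ is exactly what is needed to repair this: it makes the vertex sets match, your intersection-number computation ($1$ on composable pairs with matching middle index, $0$ otherwise) shows that the arc multiplicities match as well, and your closing remark about independence of the labelling addresses the only non-canonical ingredient. The price of your approach is that $\pi'$ is no longer determined by $\pi$ alone, but that seems unavoidable if the isomorphism is to hold for general intersection parameters; what it buys is a proof that actually establishes \eqref{Lpi=piL} in the presence of parallel arcs in the quotient, which the paper's argument glosses over.
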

\begin{proof}
Let $\pi=\{U_i : 1\le i\le m\}$ be a regular partition of $\G$ with intersection parameters $c_{ij}$. Then, consider the induced partition of its arcs (or vertices of $L(\G)$) $\pi'=\{U_{ij}:1\le i,j\le m\}$ with sets $U_{ij}=\{u_iu_j : u_i\in U_i, u_j\in U_j\}$. This partition is also regular since, from the definition of line digraph, the number of arcs from a vertex $u_iu_j\in U_{ij}$ to any vertex $u_ku_h\in U_{kh}$ is $c_{jk}$. To prove the digraph isomorphism \eqref{Lpi=piL}, assume that $L(\pi(\G))$ has the arc $U_iU_j\rightarrow U_jU_k$. This means that in $\G$ there is the path $u_i\rightarrow u_j\rightarrow u_k$, where $u_i\in U_i$, $u_j\in U_j$ and $u_k\in U_k$. But $u_iu_j\in U_{ij}$ and $u_ju_k\in U_{jk}$ so that,
in $\pi'(L(G))$, there is the arc $U_{ij}\rightarrow U_{jk}$ and this concludes the proof.
\end{proof}

In Figure~\ref{quotient-line-dibuix} we show an example of this `commutative property'.

\begin{figure}[t]
    \begin{center}
  \includegraphics[width=14cm]{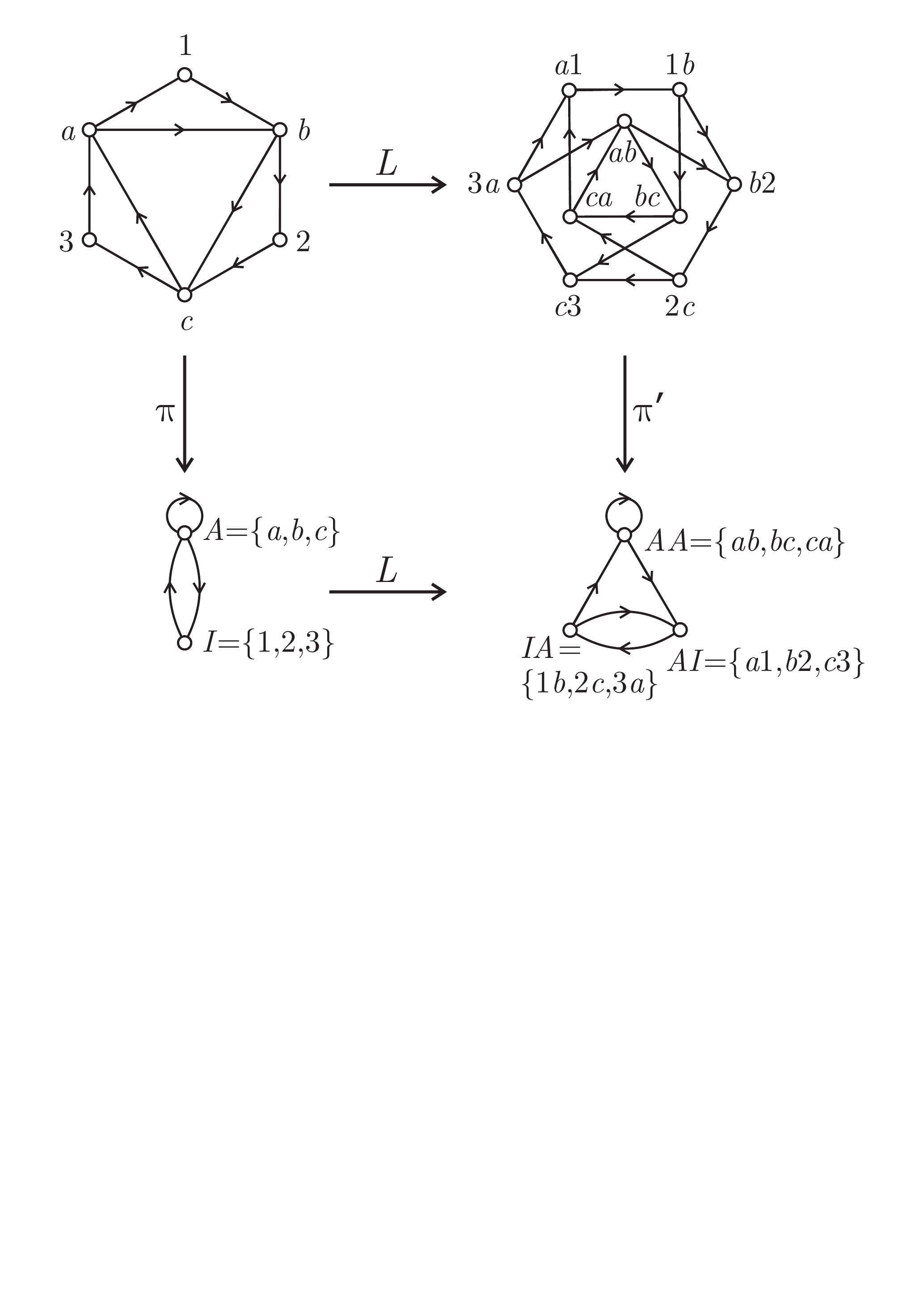}
  \end{center}
  \vskip-9.5cm
  \caption{The line and quotient digraph operations commute.}
  \label{quotient-line-dibuix}
\end{figure}


\subsection{From line digraphs to line digraphs}
Now we show a sufficient condition for being a line digraph be kept when applying the voltage digraph technique.

\begin{proposition}
\label{prop:lift-is-a-LD}
Let $\G=(V,E)$ be a digraph endowed with a voltage assignment $\alpha$. If $\G$ is a line digraph, and for every pair of  vertices $u$ and $v$ with common out-neighbor sets
$\G^+(u)=\G^+(v)=\{x_1,\ldots,x_{\delta}\}$, we have
\begin{equation}
\label{line-condition}
\alpha(ux_i)\alpha(vx_i)^{-1}=\alpha(ux_j)\alpha(vx_j)^{-1},\qquad
i,j=1,\ldots,\delta,
\end{equation}
then, the lifted digraph $\G^{\alpha}$ is again a line digraph.
\end{proposition}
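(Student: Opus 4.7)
The plan is to verify the Heuchenne characterization for $\G^{\alpha}$: for every pair of vertices $(u,g),(v,h)\in V(\G^\alpha)$, show that their out-neighbor sets in $\G^\alpha$ are either equal or disjoint. Since the out-neighbors of $(u,g)$ in the lift are exactly the pairs $(x,g\alpha(ux))$ with $x\in\G^+(u)$, and analogously for $(v,h)$, this reduces to comparing the first and second coordinates of these pairs.

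First I would dispatch the easy case: if $\G^+(u)\cap\G^+(v)=\emptyset$ in $\G$, then the first coordinates of the out-neighbors of $(u,g)$ and $(v,h)$ in $\G^\alpha$ are already disjoint, so the out-neighbor sets in $\G^\alpha$ are disjoint as well. This case uses only that $\G$ itself is a line digraph (via Heuchenne), with no appeal to \eqref{line-condition}.

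The substantive case is when $\G^+(u)=\G^+(v)=\{x_1,\dots,x_{\delta}\}$. Here the hypothesis \eqref{line-condition} says that the group element
\begin{equation*}
c:=\alpha(ux_i)\alpha(vx_i)^{-1}
\end{equation*}
does not depend on $i$, so $\alpha(ux_i)=c\,\alpha(vx_i)$ for every $i$. Consequently the out-neighbor of $(u,g)$ corresponding to $x_i$ is $(x_i,gc\,\alpha(vx_i))$, while that of $(v,h)$ corresponding to $x_i$ is $(x_i,h\,\alpha(vx_i))$. Since the first coordinates agree only when the indices match, two such pairs coincide iff $gc\,\alpha(vx_i)=h\,\alpha(vx_i)$, i.e.\ iff $h=gc$. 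Thus if $h=gc$ all $\delta$ out-neighbors coincide (equal out-neighborhoods), and if $h\ne gc$ no pair coincides for any $i$ (disjoint out-neighborhoods).

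Combining both cases, every pair of vertices in $\G^\alpha$ has out-neighborhoods that are either equal or disjoint, so by Heuchenne's condition $\G^\alpha$ is a line digraph. The only subtlety is being careful with the order of group multiplication and tracking that the common offset $c$ is well-defined by \eqref{line-condition}; this is precisely where the hypothesis enters and is the crux of the argument, but no further obstacle is foreseen.
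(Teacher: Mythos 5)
Your proof is correct and follows essentially the same route as the paper: both verify Heuchenne's condition for $\G^{\alpha}$ by using that condition \eqref{line-condition} forces a common group offset, so that two lifted vertices over $u$ and $v$ share either all or none of their out-neighbors. Your version is, if anything, slightly more explicit in spelling out the full equal-or-disjoint dichotomy (including the disjoint cases), whereas the paper only exhibits the vertex $(v,h)$ sharing one out-neighbor and shows it then shares the rest.
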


\begin{proof}
It suffices to prove that $\G^{\alpha}$ satisfies Heuchenne's condition.
With this aim, let $ux_i,ux_j\in E$, so that in $\G^{\alpha}$ the vertex $(u,g)$
is adjacent to the vertices $(x_i,g\alpha(ux_i))$ and $(x_j,g\alpha(ux_j))$. Now,
if there is a vertex $v\in V$ such that $vx_i\in E$, we also have $vx_j\in E$ (because
$\G$ is a line digraph). But the former implies that vertex $(v,h)$, where
$h=g\alpha(ux_i)\alpha(vx_i)^{-1}$ is adjacent to vertex
$$
(x_i,h\alpha(vx_i))=(x_i,g\alpha(ux_i)).
$$
Moreover, from (\ref{line-condition}) interchanging $i$ and $j$, we get that
$\alpha(vx_j)=\alpha(vx_i)\alpha(ux_i)^{-1}\alpha(ux_j)$.

Thus, the vertex $(v,h)$
is also adjacent to the vertex
$$
(x_j,h\alpha(vx_j))=(x_j,g\alpha(ux_j)).
$$
Consequently, the vertices $(u,g)$ and $(v,h)$ satisfy Heuchenne's condition and
$\G^{\alpha}$ is a line digraph.
\end{proof}

\subsection{Vertex-splitting}

Let us now consider what we call {\em the vertex-splitting method} to ``blow up'' a digraph.
Given a digraph $\G=(V,E)$ on $n$ vertices and $m$ arcs, the vertex-split digraph $S_{\mu}(\G)$, where $n\le \mu \le m$ is constructed as follows. Every vertex $v\in V$ is split into $\iota(v)$ vertices $v_1,\ldots, v_{\iota(v)}$, where $\iota(v)\le \delta^-(v)$. Thus the order of
$S_{\mu}(\G)$ is
$$
\mu=\sum_{v\in V}\iota(v)\le \sum_{v\in V}\delta^-(v),
$$
satisfying $n\le \mu \le m$.
Moreover, for each arc $vw\in E$, we choose any vertex, say $w_{j}$, with $1\le j\le \iota(w)$,  and set the arcs $v_iw_j$ for every $i=1,\ldots,\iota(v)$. We proceed in this way with all the arcs of $\G$, with the condition that, in the end, all vertices of $S_{\mu}(\G)$ must have nonzero indegree. (Notice that this is always possible, as $\iota(u)\le \delta^-(u)$ for every $u\in V$.)

\subsection{Partial line digraphs}

The above method is shown to be equivalent to the partial line digraph technique, that was proposed by Fiol and Llad\'o in \cite{fl92}, which is as follows.
Given the digraph $\G(V,E)$ as above, let $E'\subseteq E$ a subset of $\mu$ arcs satisfying $\{v: uv\in E'\}=V$, so that $n\le\mu\le m$. Then in the {\em partial line digraph} of $\G$, denoted by $L_{\mu}(G)$, each vertex represents an arc of $E'$, and a vertex $uv$ is adjacent to the vertices $v'w$ for each $w\in\G^+(v)$, where
\begin{enumerate}
\item
$v'=v$, if $vw\in E'$,
\item
$v'$ is any other vertex of $\G$ such that $v'w\in E'$, otherwise.
\end{enumerate}

\begin{lemma}
By appropriately chosen the above vertices $v'$ in the construction of the vertex-split and partial line digraphs of the digraph $\G$, we have
the isomorphism
\begin{equation}
\label{S=L}
S_{\mu}(\G)\cong L_{\mu}(\G).
\end{equation}
\end{lemma}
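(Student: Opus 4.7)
The plan is to produce an explicit bijection between the vertex sets of $S_\mu(\G)$ and $L_\mu(\G)$ that is forced by the choices entering each construction, and then verify that the adjacency rules coincide on the nose. Both constructions are parametrized by essentially the same data---a choice of $\mu$ arcs covering $V$ as heads, together with a further arc-by-arc selection---so the isomorphism amounts to translating one parametrization into the other.

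First I would start from the partial line digraph side. Given $E'\subseteq E$ with $|E'|=\mu$ and $\{v:uv\in E'\}=V$, define $\iota(v)=|\{u:uv\in E'\}|$. Then $1\le\iota(v)\le\delta^-(v)$ and $\sum_{v\in V}\iota(v)=\mu$, which is precisely an admissible profile for the vertex-splitting. Enumerate the arcs of $E'$ ending at $v$ as $u^{(v)}_1v,\ldots,u^{(v)}_{\iota(v)}v$ and define $\phi:V(L_\mu(\G))\to V(S_\mu(\G))$ by $\phi(u^{(v)}_iv)=v_i$. The indegree requirement in the vertex-split definition (every split copy must receive at least one incoming arc) is automatic from the covering condition on $E'$.

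Next I would match adjacencies. In $L_\mu(\G)$, a vertex $uv$ is joined, for each $w\in\G^+(v)$, to a vertex $v'w\in E'$, where $v'=v$ if $vw\in E'$ and $v'$ is a fixed choice otherwise. The key point is that the selection of $v'$ depends only on the arc $vw\in E$, not on the particular $u$. Translating through $\phi$: for each $vw\in E$ there is a single copy $w_j$ (the one indexed by $v'w\in E'$) such that every $v_i$ is adjacent to $w_j$. This is exactly the vertex-splitting rule, where for each $vw\in E$ one selects a copy $w_j$ and draws arcs $v_iw_j$ for all $i=1,\ldots,\iota(v)$. Conversely, given any vertex-split $S_\mu(\G)$, picking for each copy $v_i$ one of its incoming arcs produces an admissible $E'$ together with a prescription for the $v'$, realizing $S_\mu(\G)$ as an $L_\mu(\G)$.

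The argument has essentially no analytic content; the only obstacle is bookkeeping, namely ensuring that the enumeration of the $\iota(v)$ split copies of $v$ on one side matches the enumeration of arcs of $E'$ ending at $v$ on the other, and that the arc-by-arc selections ($v'$ on the partial line digraph side versus $w_j$ on the vertex-splitting side) are made consistently under $\phi$. Once these identifications are in place, the isomorphism~\eqref{S=L} is immediate.
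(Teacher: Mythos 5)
Your proposal is correct and follows essentially the same route as the paper: identify the $\iota(v)$ split copies of $v$ with the arcs of $E'$ ending at $v$, observe that the choice of $v'$ (equivalently, of the copy $w_j$) is made per arc $vw$ of $\G$, and check that the two adjacency rules then coincide. Your write-up is in fact more explicit than the paper's about the bijection $\phi$ and the converse direction, but the underlying argument is the same.
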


\begin{proof}
Let the partial line digraph $L_{\mu}(\G)$ be constructed from the arc set $E'$. Then, in constructing $S_{\mu}(\G)$, every vertex $v$ of $\G$ is split into the vertices $v_1,\ldots,v_{\iota(v)}$ if and only if $v_jv\in E'$ for every $j=1,\ldots,\iota(\G)$.
Now, for every $v\in V$, assume that $uv\in E'$. Then, in $S_{\mu}(\G)$ we have $v_i=u$ for some $j=1,\ldots,\iota(v)$.
Assuming that $vw\in E$, we have to consider two cases:
\begin{itemize}
\item
If $vw\in E'$, then in $S_{\mu}(\G)$ we choose every vertex $v_i$, for $i=1\ldots,\iota(v)$, to be adjacent to the vertex $w_j=v$.
\item
Otherwise, in $L_{\mu}(\G)$ we choose the vertex $uv$ to be adjacent to the vertex $v'w$, where $v'=w_j$ (for the chosen vertex $w_j$ in $S_{\mu}(\G)$).
\end{itemize}
Then, it is clear that this gives the claimed isomorphism between both digraphs.
\end{proof}
In particular, when $\iota(u)=\delta^{-}(u)$ for every $u$, we have $S_{m}(\G)\cong L(\G)$.

As an straightforward consequence of the isomorphism \eqref{S=L} we have the following result.
\begin{lemma}
Every partial line digraph $L_{\mu}(\G)$ of a digraph $\G$ can be seen as an expanded digraph of $\G$ with appropriate set $\Phi$ of mappings.
\end{lemma}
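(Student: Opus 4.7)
The plan is to exhibit $L_{\mu}(\G)$ explicitly as an expanded digraph $\G^{\Phi}$. The natural route is to go through the isomorphism $S_{\mu}(\G) \cong L_{\mu}(\G)$ proved in the preceding lemma, since the vertex-splitting construction is already formulated in exactly the language of expanded digraphs — each vertex of $\G$ is already being assigned a vertex set, and the arcs of the blown-up digraph are already being defined arc-by-arc of the base.

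For each $v \in V$, I would take the vertex set
$$ U_v = \{v_1,\ldots,v_{\iota(v)}\} $$
produced by the vertex-splitting of $v$. By the definition of $S_{\mu}(\G)$, for each arc $(v,w)\in E$ there is a distinguished vertex $w_{j(v,w)}\in U_w$ such that the arcs added from $U_v$ to $U_w$ are precisely $v_i w_{j(v,w)}$ for $i=1,\ldots,\iota(v)$. Accordingly, define $\phi_{vw}\colon U_v\to U_w$ to be the constant map $v_i\mapsto w_{j(v,w)}$, and set $\Phi=\{\phi_{vw}:(v,w)\in E\}$. Note that if $v$ and $w$ are joined by parallel arcs in $\G$, then one such constant map is declared per arc — the expanded-digraph framework allows this without modification.

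With these choices, the arcs of $\G^{\Phi}$ are by definition
$$ \{(x,\phi_{vw}(x)) : (v,w)\in E,\ x\in U_v\}, $$
which coincides arc-by-arc with the arc set of $S_{\mu}(\G)$. Hence $\G^{\Phi}=S_{\mu}(\G)$, and \eqref{S=L} gives $\G^{\Phi}\cong L_{\mu}(\G)$. The only point to check is the nonzero-indegree requirement on $S_{\mu}(\G)$, but this is baked into the vertex-splitting construction itself, so it transfers automatically. There is no genuine obstacle here: once one observes that the vertex-splitting recipe matches the expanded-digraph definition term by term, the statement becomes essentially a rereading of the previous lemma through the lens of the expanded-digraph formalism.
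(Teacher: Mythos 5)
Your proof is correct and follows exactly the route the paper intends: the paper gives no explicit argument, merely noting the lemma is a straightforward consequence of the isomorphism $S_{\mu}(\G)\cong L_{\mu}(\G)$, and you have simply made the implicit construction explicit by taking $U_v=\{v_1,\ldots,v_{\iota(v)}\}$ and letting $\phi_{vw}$ be the constant map onto the chosen split vertex $w_{j(v,w)}$. Your observations about one map per parallel arc and the nonzero-indegree condition are accurate and fill in details the paper leaves unstated.
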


Moreover, the diameter and mean distance of the vertex-split digraph $S_{\mu}(\G)$ is only increased by at most one.

\begin{proposition}
Let $\G$ be a digraph different from a cycle, with diameter $D$ and mean distance $\overline{D}$. Then, the diameter $D^*$ and
mean distance $\overline{D}^*$ of the vertex-split digraph $S_{\mu}(\G)$, with $\mu>m$, satisfy:
\begin{eqnarray*}
D^* &= & D+1,\\
\overline{D}^* &< & \overline{D}+1.
\end{eqnarray*}
\end{proposition}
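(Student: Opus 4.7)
The plan is to invoke the isomorphism $S_{\mu}(\G) \cong L_{\mu}(\G)$ from the preceding lemma and carry out all distance computations inside the partial line digraph. Vertices of $L_{\mu}(\G)$ are arcs $uv \in E' \subseteq E$, and the key observation is that any directed path in $L_{\mu}(\G)$ projects, via second components, to a walk in $\G$ of the same length; conversely, any walk in $\G$ lifts uniquely to a directed path in $L_{\mu}(\G)$ once an initial arc $u_1 v_1 \in E'$ is fixed, with each subsequent vertex determined by the partial line digraph adjacency rule. This projection--lifting correspondence drives both the diameter and the mean distance bounds.

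For the diameter, the upper bound $D^* \leq D + 1$ is obtained as follows: given two vertices $xu, vy \in L_{\mu}(\G)$, use the fact that $vy$ has nonzero indegree to choose an assigned predecessor $wv \in E'$, take a shortest $u$-to-$w$ walk in $\G$ (of length at most $D$), lift it step by step, and append the final arc into $vy$, giving a path of length at most $D + 1$. For the lower bound $D^* \geq D + 1$, select $u, v \in V$ with $d_{\G}(u, v) = D$ together with arcs $xu, vy \in E'$ satisfying $xu \neq vy$; the projection of any path from $xu$ to $vy$ is a walk $u \to \cdots \to v \to y$ in $\G$ of length at least $D + 1$. The main technical obstacle is ensuring that the separating choice $xu \neq vy$ is available, and this is precisely where the hypothesis that $\G$ is not a cycle is used: in a pure cycle every distance-$D$ pair forces the same arc into both roles, while in any other strongly connected digraph (together with the nontrivial splitting) sufficient branching is available to separate them.

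For the mean distance inequality $\overline{D}^* < \overline{D} + 1$, the analysis is more delicate. A naive pointwise bound $d_{S_{\mu}(\G)}(x, y) \leq d_{\G}(\pi(x), \pi(y)) + 1$, where $\pi: v_i \mapsto v$ is the natural projection, is not always valid (pairs of split copies of a common vertex can lie far apart in $S_{\mu}(\G)$), so one partitions the $\mu(\mu - 1)$ ordered pairs according to $\pi \times \pi$ and applies the appropriate bound on each class: the diameter bound $D + 1$ for pairs on the diagonal, and the refined ``distance to an assigned predecessor'' bound for off-diagonal pairs. Weighting by the multiplicities $\iota(v)$ and comparing against $\overline{D}\cdot n(n-1)$ yields the weak inequality $\overline{D}^* \leq \overline{D} + 1$. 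The strict inequality then follows from the existence of many ``saving pairs'' $(u_i, v_j)$ where $uv \in E$ is the arc assigned to $v_j$: each contributes $d_{S_{\mu}(\G)}(u_i, v_j) = 1$ instead of the worst case $d_{\G}(u, v) + 1 = 2$, producing a strictly positive aggregate saving.
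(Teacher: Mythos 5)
The paper does not actually prove this proposition: its ``proof'' is a one-line reduction, via the isomorphism \eqref{S=L}, to the diameter and mean-distance theorem for partial line digraphs in Fiol and Llad\'o \cite{fl92}. You instead try to reprove that theorem from scratch, which is a legitimate but genuinely different route. Your upper bound is sound once slightly repaired: from $xu$ lift a shortest $u$-to-$v$ walk (length $\le D$), arriving at a vertex with second component $v$, and then step to $vy$, which is possible because $vy\in E'$ forces the choice $v'=v$; as literally written (walk to a predecessor $w$ of $v$, then ``append the final arc'') your route has length $d_\G(u,w)+2$, which is not obviously $\le D+1$. The real problems lie in the other two steps.

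First, the lower bound. The claim that any path ending at $vy$ projects to a walk $u\to\cdots\to v\to y$ is the \emph{full} line digraph fact and fails for partial line digraphs: a vertex $ab$ with $b\ne v$ is adjacent to $vy$ whenever $y\in\G^+(b)$, $by\notin E'$, and $vy$ is the vertex chosen to represent the arc $by$. The projected walk then ends $\cdots\to b\to y$ with $b$ possibly much closer to $u$ than $v$ is, so no bound of $D+1$ follows. (Moreover, an arc $vy\in E'$ with tail $v$ need not exist; the condition $\{v:uv\in E'\}=V$ only guarantees that every vertex is the \emph{head} of some arc of $E'$.) This is precisely where the hypotheses ``$\G$ is not a cycle'' and $\mu>n$ must do real work, and ``sufficient branching is available'' is an assertion, not an argument. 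Second, the mean distance. The correct pointwise bound $d_{L_\mu}(ab,cd)\le d_\G(b,c)+1$ gives $\sum d_{L_\mu}(ab,cd)\le\sum_{b,c}\iota(b)\iota(c)\bigl(d_\G(b,c)+1\bigr)$, where $\iota(v)$ counts the arcs of $E'$ entering $v$; dividing by $\mu(\mu-1)$ and ``comparing against $\overline{D}\cdot n(n-1)$'' does not yield $\overline{D}+1$ unless you control the nonuniform weights $\iota(b)\iota(c)$, which can concentrate mass on distant pairs and push the weighted average above the unweighted one. Both steps need genuine arguments (or a restriction to the regular case); otherwise the honest proof is the paper's, namely to quote \cite{fl92} through \eqref{S=L}.
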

\begin{proof}
This is again a consequence of the isomorphism \eqref{S=L}, together with the result proved in the context of partial line digraphs (see Fiol and Llad\'{o}~\cite{fl92}).
\end{proof}

\noindent{\large \bf Acknowledgments.}
This research is supported by the
{\em Ministerio de Ciencia e Innovaci\'on}, Spain, and the {\em European Regional
Development Fund} under project MTM2014-60127-P, and the {\em Catalan Research
Council} under project 2014SGR1147 (C. D. and M. A. F.).

\newpage

\end{document}